\numberwithin{figure}{section}
\newcommand\norm[1]{\left\lVert#1\right\rVert}
\theoremstyle{plain}
\newtheorem{thm}{Theorem}[section]
\newtheorem{prop}[thm]{Proposition}
\newtheorem*{cor}{Corollary}
\theoremstyle{definition}
\newtheorem{defn}{Definition}[section]
\newtheorem{exmp}{Example}[section]
\theoremstyle{remark}
\title{Some results on $\varphi$--convex functions and geodesic $\varphi$-convex functions}
\author[A. A. Shaikh, Akhlad Iqbal and C. K. Mondal]{Absos Ali Shaikh$^1$, Akhlad Iqbal$^2$ and Chandan Kumar Mondal$^3$}
\address{\noindent\newline $^{1,3}$ Department of Mathematics,\newline University of
Burdwan, Golapbag,\newline Burdwan-713104,\newline West Bengal, India}
\email{aask2003@yahoo.co.in, aashaikh@math.buruniv.ac.in}
\email{chan.alge@gmail.com}
\address{\noindent\newline $^{2}$ Department of Mathematics,\newline Aligarh Muslim University,\newline Aligarh-202002, \newline UP, India}
\email{akhlad6star@gmail.com, akhlad.mm@amu.ac.in}
\begin{document}

\begin{abstract}
As a generalization of geodesic function, in the present paper, we introduce the notion of { \it geodesic $\varphi$-convex function} and deduce some basic properties of $\varphi$-convex function and geodesic $\varphi$-convex function. We  also introduce the concept of { \it geodesic $\varphi$-convex set} and { \it $\varphi$-epigraph} and investigate a characterization of geodesic $\varphi$-convex functions in terms of their $\varphi$-epigraphs. 
\end{abstract}
\keywords{$\varphi$-convex, Geodesic $\varphi$-convex functions, Geodesic $\varphi$-convex sets, $\varphi$-epigraphs, Riemannian manifolds.}
\subjclass[2010]{52A20, 52A30, 52A42, 53B20, 53C20, 53C22}

\maketitle
\section{\textbf{Introduction}}
Convex sets and convex functions play an important role in the study of the theory of nonlinear programming and optimization. But in many situations only convexity is not enough to provide a satisfactory solution of a problem. Hence it is necessary to generalize the concept of convexity notion. Again, due to the curvature and torsion of a Riemannian manifold highly nonlinearity appears in the study of convexity on such a manifold. Geodesics are length minimizing curves,  and the notion of geodesic convex function  arises naturally on a complete Riemannian manifold  and such a concept is investigated recently in   \cite{GS81,UDR94}.\\

\indent 
In 2016 Eshaghi Gordji et. al. \cite{ERD16} defined the notion of the $\varphi$-convex function and deduced Jensen and Hadamard type inequalities for such functions. In the present paper, we have deduced some other proerties of $\varphi$-convex functions. Again, generalizing the concept of  $\varphi$-convex function, we have introduced the notion of {\it geodesic  $\varphi$-convex function} on a complete Riemannian manifold and showed its existence by a proper example (see, Ex: 2.1). Convex sets on a Riemannian manifold have been generalized in different ways such as geodesic $E$-convex function \cite{ASA12}, geodesic semi-$E$-convex function \cite{AAS11}, geodesic semi $E$-$b$-vex functions \cite{AW15} etc. We have also introduced a new class of sets, called, {\it geodesic $\varphi$-convex sets} on a complete Riemannian manifold.\\

\indent 
The paper is organized as follows. Section 2 deals with the rudimentary facts of convex functions and convex sets. Section 3 is devoted to the study of some properties of mean-value like inequalities of  $\varphi$-convex functions. In section 4, we study some properties of geodesic  $\varphi$-convex functions on a complete Riemannian manfold and prove that such notion is invariant under a diffeomorphism. We also study sequentially upper bounded functions on a complete Riemannian manifold and prove that supremum of such  sequence of functions is a geodesic  $\varphi$-convex function. We also obtain a condition for which a geodesic  $\varphi$-convex function has a local minimum. The last section is concerned with $\varphi$-epigraphs on a complete  Riemannian manifold and obtain a characterization of geodesic $\varphi$-convex functions in terms of their $\varphi$-epigraphs (see, Theorem. 5.1.).

\section{\textbf{Preliminaries}}
In this section, we recall some definition and known results of convex and $\varphi$-convex functions and also some results about Riemannian manifolds which will be used throughout the paper. For the detailed discussion of Riemannian manifold we refer \cite{LAN99}.\\
Let $I=[a,b]$ be an interval in $\mathbb{R}$ and $\varphi:\mathbb{R}\times\mathbb{R}\rightarrow\mathbb{R}$ be a bifuction  \cite{ERD16}.
\begin{defn}
A function $f:I\rightarrow\mathbb{R}$ is called convex if for any two points $x,y\in I$ and $t\in[0,1]$
$$f(tx+(1-t)y)\leq tf(x)+(1-t)f(y).$$
\end{defn}
\begin{defn}\cite{ERD16}
A function $f:I\rightarrow\mathbb{R}$ is called $\varphi$-convex if
\begin{equation*}
f(t x+(1-t)y)\leq f(y)+t \varphi(f(x),f(y)), \eqno(1.1)
\end{equation*}
for all $x,y\in I$ and $t\in [0,1]$.
\end{defn}
Especially, if $\varphi (x,y)=x-y$ then $\varphi$-convex function reduces to a convex function.

\begin{defn}\cite{ERD16}
The function $\varphi:\mathbb{R}\times\mathbb{R}\rightarrow\mathbb{R}$ is called \textit{nonnegatively homogeneous} if $\varphi(\lambda x,\lambda y)=\lambda \varphi(x,y)$ for all $x,y\in \mathbb{R}$ and for all $\lambda\geq 0,$ and called \textit{additive} if $\varphi(x_1+x_2,y_1+y_2)=\varphi(x_1,y_1)+\varphi(x_2,y_2)$ for all $x_1,x_2,y_1,y_2\in\mathbb{R}$. If $\varphi$ is both \textit{nonnegatively homogeneous} and \textit{additive} then $\varphi$ is called \textit{nonnegatively linear}.
\end{defn}

Recently, Hanson's \cite{HA81}  generalized convex sets and introduced the concept of invex sets, defined follows.
\begin{defn}\cite{HA81}
A set $K\subseteq \mathbb{R}$ is said to be invex if there exists a function $\eta:\mathbb{R}\times \mathbb{R}\rightarrow \mathbb{R}^n$ such that
$$x, y \in K, \lambda \in [0,1]\Rightarrow y+\lambda \eta(x,y) \in K.$$
\end{defn}

\begin{defn}\cite{ERD16}
Let $K \subseteq R$ be an invex set with respect to $\eta$. A function  $f:K\rightarrow\mathbb{R}$ is said to be $G$-preinvex with respect to  $\eta$ and $\psi$ if 
$$f(y+t \eta(x,y))\leq f(y)+t \psi(f(x),f(y)),$$
for all $x,y \in K$, $t \in [0,1]$.
\end{defn}

Let $(M,g)$ be a complete Riemannian manifold with Riemannian metric $g$ and Levi-Civita connection $\nabla$. We recall that a geodesic is a smooth curve $\alpha$ whose tangent is parallel along the curve $\alpha$, that is, $\alpha$ satisfies the equation $\nabla_{\frac {d\alpha(t)}{dt}} \frac{d\alpha(t)}{dt}=0$. We shall denote the geodesic arc connecting $x \in M$ and $y \in M$ by $\alpha_{xy}:[0,1]\rightarrow M$ such that $\alpha_{xy}(0)=x \in M$, $\alpha_{xy}(1)=y \in M$.
\begin{defn}\cite{UDR94}
A non-empty subset $A$ of $M$ is called totally convex, if it contains every geodesic $\alpha_{xy}$ of $M$, whose end points $x$ and $y$ belong to $A$.
\end{defn}
\begin{defn}\cite{UDR94}
If $A$ is a totally convex set of $M$, then  $f:A\rightarrow\mathbb{R}$ is called geodesically convex if 
$$f(\alpha_{xy}(t))\leq (1-t)f(x)+tf(y)$$ holds for every $x,y \in A$ and $t \in [0,1]$. If the inequality is strict then $f$ is called strictly geodesically convex.
\end{defn}
 Generalizing the notion of $\varphi$-convex function in Riemannian manifold,  we introduce the concept of geodesic $\varphi$-convex function, defined as follows:
\begin{defn}
If $A$ is a totally convex set in $M$,  then  $f:A\rightarrow\mathbb{R}$ is called geodesic $\varphi$-convex if
$$f(\alpha_{xy}(t))\leq f(x)+t\varphi(f(y),f(x)),$$ 
holds for every $x,y\in A$ and $t \in [0,1].$
\end{defn}

{\bf Remark.}
We note that if $f$ is differentiable, then $f$ is called geodesic $\varphi$-convex if and only if $$df_x\dot{\alpha}_{xy}\leq \varphi(f(y),f(x)),$$
where $df_x$ is the differential of $f$ at the point $x \in A \subset M$ and dot denotes the differentiation with respect to $t$.

If the above inequality is strict then $f$ is called strictly geodesic $\varphi$-convex function.

\vspace{.2cm}
\noindent
 Geodesic convex functions are obviously geodesic $\varphi$-convex, where $\varphi(x,y)=x-y$. In the following example we show that geodesic  $\varphi$-convex function on $M$ need not be geodesic convex.

\begin{exmp}
Let $M=\mathbb{R}\times\mathbb{S}^1$ and define $f:M\rightarrow\mathbb{R}$ by $f(x,s)=x^3$. Then $f$ is not geodesic convex in $M$. Now define $\varphi:\mathbb{R}\times\mathbb{R}\rightarrow\mathbb{R}$ by $\varphi(x,y)=x^3-y^3$. We note that for any two points $(x,s_1)$ and $(y,s_2)$ the geodesic joining them is a portion of a helix of the form $\alpha(t)=(tx+(1-t)y,e^{i[t\theta_1+(1-t)\theta_2]})$, for $0\leq t\leq 1$, where $e^{i\theta_1}=s_1$ and $e^{i\theta_2}=s_2$  for some $\theta_1,\theta_2\in[0,2\pi]$. Hence
\begin{eqnarray*}
f(\alpha(t))&=&(tx+(1-t)y)^3\\
&=&t^3(x^3-3x^2y+3xy^2-y^3)+t^2(3x^2y-6xy^2+3y^3)+t(3xy^2-3y^3)+y^3\\
&\leq &y^3+t(x^3-y^3)\\
&=&f(y,s_2)+t\varphi(f(x,s_1),f(y,s_2)).
\end{eqnarray*}
This proves that $f$ is geodesic $\varphi$-convex in $M$.
\end{exmp}

\noindent
Barani et. al. \cite{B07} extended the work of \cite{HA81} to Riemannian manifold and defined geodesic invex sets and geodesic $\eta$-preinvex functions.

\begin{defn}\cite{B07}
Let $M$ be an n-dimensional Riemannian manifold and $\eta:M\times M\rightarrow TM$ be a function such that for every $x, y \in M$, $\eta (x,y) \in T_{y} M$. A non-empty subset $S$ of $M$ is said to be geodesic invex set with respect to $\eta$ if for every $x, y \in S$, there exists a unique geodesic $\alpha_{xy}:[0, 1]\rightarrow M$ such that $$\alpha_{xy}(0)=y,~~ \dot{\alpha}_{xy}(0)=\eta(x, y),~~ \alpha_{xy}(t)\in S,~~~{\forall} ~t\in [0, 1].$$
\end{defn}

\begin{defn}\cite{B07}
Let $S$ be an open subset of $M$ which is geodesic invex with respect to $\eta:M\times M\rightarrow TM$. A function $f:S\rightarrow \mathbb{R}$ is said to be geodesic $\eta$-preinvex if $$f(\alpha_{xy}(t))\leq tf(x)+(1-t)f(y),$$ for every $x,y \in S,~ t\in[0,1]$.
\end{defn}

\noindent
We generalize above definition as follows.

\begin{defn}
Let $S \subseteq M$ be geodesic invex with respect to $\eta$. A function $f:S\rightarrow \mathbb{R}$ is said to be geodesic $\varphi$-preinvex with respect to $\eta$ and $\varphi$, if $$f(\alpha_{xy}(t))\leq f(y)+t \varphi (f(x), f(y)),$$ for every $x,y \in S,~ t\in[0,1]$.
\end{defn}


\section{\textbf{Some properties of $\varphi$-convex functions}}
The notion of $\varphi$-convex functions have been studied in \cite{ERD16},  where Jensen type inequality and Hermite-Hadamard type inequality have been deduced for $\varphi$-convex function. This section deals with some properties of $\varphi$-convex functions. Let $f:I\rightarrow\mathbb{R}$ be a $\varphi$-convex function.
For any two points $x_1$ and $x_2$ in $I$ with $x_1<x_2$, each point $x$ in $(x_1,x_2)$ can be expressed as 
$$x=t x_1+(1-t)x_2 , \text{ where }t=\frac{x_2-x}{x_2-x_1}.$$
 Hence a function $f$ is $\varphi$-convex if 
 \begin{equation*}
 f(x)\leq f(x_2)+\frac{x_2-x}{x_2-x_1}\varphi(f(x_1),f(x_2)),\text{ for }x_1<x<x_2\text{ in }I.
 \end{equation*}
 Rearranging the above terms we get 

$$ \frac{f(x_2)-f(x)}{x_2-x}\geq \frac{\varphi(f(x_1),f(x_2))}{x_1-x_2}\text{ for }x_1<x<x_2\text{ in }I.\eqno(3.1)$$

 So, a function $f$ is $\varphi$-convex if it satisfies the inequality (3.1).
 
 \begin{thm}\label{th1}
If $f:I\rightarrow\mathbb{R}$ is differentiable and $\varphi$-convex in $I$ and $f(x_1)\neq f(x_2)$ then there exist $\xi,\eta\in (x_1,x_2)\subset I$ such that $f'(\xi)\geq \frac{\varphi(f(x_1),f(x_2))}{f(x_1)-f(x_2)}f'(\eta)\geq f'(\eta).$
 \end{thm}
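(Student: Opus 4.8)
The plan is to read this as a Cauchy--mean-value-type statement and to produce the two points $\xi$ and $\eta$ by two separate applications of the classical mean value theorem, with inequality $(3.1)$ supplying the lower estimate at $\xi$ and the defining inequality $(1.1)$ of $\varphi$-convexity (taken at $t=1$) controlling the multiplier $\frac{\varphi(f(x_1),f(x_2))}{f(x_1)-f(x_2)}$. The differentiability hypothesis is what lets me pass from the difference quotients appearing in $(3.1)$ to genuine values of $f'$, and the assumption $f(x_1)\neq f(x_2)$ is exactly what is needed to divide by $f(x_1)-f(x_2)$.

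First I would produce $\xi$. Fixing any interior point $x\in(x_1,x_2)$ and applying the mean value theorem to $f$ on $[x,x_2]$ yields $\xi\in(x,x_2)\subset(x_1,x_2)$ with $f'(\xi)=\frac{f(x_2)-f(x)}{x_2-x}$. Feeding this into $(3.1)$ gives at once $f'(\xi)\geq\frac{\varphi(f(x_1),f(x_2))}{x_1-x_2}$. Next I would produce $\eta$ and rewrite this right-hand side: applying the mean value theorem to $f$ on the whole interval $[x_1,x_2]$ yields $\eta\in(x_1,x_2)$ with $f'(\eta)=\frac{f(x_1)-f(x_2)}{x_1-x_2}$, and since $f(x_1)\neq f(x_2)$ we have $f'(\eta)\neq 0$ and $\frac{1}{x_1-x_2}=\frac{f'(\eta)}{f(x_1)-f(x_2)}$. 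Substituting this identity into the bound for $f'(\xi)$ converts $\frac{\varphi(f(x_1),f(x_2))}{x_1-x_2}$ into $\frac{\varphi(f(x_1),f(x_2))}{f(x_1)-f(x_2)}\,f'(\eta)$, which is precisely the first of the two asserted inequalities; note that this step is an identity, so it holds irrespective of the signs involved.

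For the second inequality it remains to compare the multiplier $K:=\frac{\varphi(f(x_1),f(x_2))}{f(x_1)-f(x_2)}$ with $1$. Putting $t=1$, $x=x_1$, $y=x_2$ in $(1.1)$ gives $f(x_1)\leq f(x_2)+\varphi(f(x_1),f(x_2))$, that is, $\varphi(f(x_1),f(x_2))\geq f(x_1)-f(x_2)$, and this is the structural inequality that is meant to force $Kf'(\eta)\geq f'(\eta)$.

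I expect the main obstacle to lie precisely at this last passage. Going from $\varphi(f(x_1),f(x_2))\geq f(x_1)-f(x_2)$ to the claimed $Kf'(\eta)\geq f'(\eta)$ requires careful bookkeeping of the sign of $f(x_1)-f(x_2)$ (equivalently, of $f'(\eta)$), because dividing by $f(x_1)-f(x_2)$ to bound $K$ and then multiplying back by $f'(\eta)$ each potentially reverse the inequality. Writing the target as $(K-1)f'(\eta)\geq 0$ makes the issue transparent: one must verify that $K-1$ and $f'(\eta)$ carry compatible signs, and reconciling these signs consistently (and pinning down exactly which monotonicity or positivity hypothesis on $\varphi$ is being used) is the genuinely delicate part of the argument, the portion I would scrutinize most closely against the authors' proof.
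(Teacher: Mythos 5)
Your reconstruction of the first inequality is exactly the paper's argument: the authors likewise rewrite $\frac{\varphi(f(x_1),f(x_2))}{x_1-x_2}$ as $\frac{\varphi(f(x_1),f(x_2))}{f(x_1)-f(x_2)}\cdot\frac{f(x_2)-f(x_1)}{x_2-x_1}$ and invoke the mean value theorem, so up to $f'(\xi)\geq K f'(\eta)$ with $K=\frac{\varphi(f(x_1),f(x_2))}{f(x_1)-f(x_2)}$ you and the paper agree completely (the paper's ``$\xi\in(x_1,x)$'' is a typo for your correct $\xi\in(x,x_2)$). The step you single out as delicate is not resolved in the paper either: the authors pass from $\varphi(f(x_1),f(x_2))\geq f(x_1)-f(x_2)$ directly to $Kf'(\eta)\geq f'(\eta)$ with no sign discussion whatsoever.

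Carrying out the bookkeeping you call for shows that this step fails. Write $V=\varphi(f(x_1),f(x_2))$ and $D=f(x_1)-f(x_2)\neq 0$, so $V\geq D$ and the mean value theorem gives $f'(\eta)=\frac{f(x_2)-f(x_1)}{x_2-x_1}=\frac{D}{x_1-x_2}$. Then
$$(K-1)\,f'(\eta)=\frac{V-D}{D}\cdot\frac{D}{x_1-x_2}=\frac{V-D}{x_1-x_2}\leq 0,$$
because $V-D\geq 0$ while $x_1-x_2<0$. So for the $\eta$ produced by the mean value theorem the second inequality points the wrong way, with equality only in the degenerate case $V=D$; checking the two sign cases $D>0$ and $D<0$ separately leads to the same conclusion. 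Indeed the theorem is false as stated, not merely unproved: take $I=[0,1]$, $f(x)=x$, $\varphi(u,v)=u-v+1$. Then $f$ is $\varphi$-convex (the defining inequality reduces to $0\leq t$), $f(0)\neq f(1)$, $K=\frac{\varphi(0,1)}{0-1}=0$ and $f'\equiv 1$, so $Kf'(\eta)=0<1=f'(\eta)$ for every $\eta\in(0,1)$ and no admissible pair $(\xi,\eta)$ exists. In short, your plan reproduces the paper's proof, and the one step you declined to take on faith is a genuine error; it could only be repaired by adding a hypothesis forcing $\varphi(f(x_1),f(x_2))=f(x_1)-f(x_2)$, or by reversing the second inequality in the statement.
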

 \begin{proof}
 Since $f$ is $\varphi$-convex, from (2.1)  we get
 \begin{eqnarray}
  \frac{f(x_2)-f(x)}{x_2-x}&\geq& \frac{\varphi(f(x_1),f(x_2))}{x_1-x_2}\text{ for }x\in (x_1,x_2)\subset I,\nonumber \\
  &=&\frac{\varphi(f(x_1),f(x_2))}{f(x_1)-f(x_2)}\frac{f(x_2)-f(x_1)}{x_2-x_1}.\nonumber
  \end{eqnarray}
  Now applying mean-value theorem, the above inequality yields

  \begin{equation*}
 f'(\xi)\geq \frac{\varphi(f(x_1),f(x_2))}{f(x_1)-f(x_2)}f'(\eta) \eqno(3.2)
  \end{equation*}

  for some $\xi\in(x_1,x)\subset(x_1,x_2)$ and $\eta\in (x_1,x_2)$.
  Again by setting $t=1$ in (1.1) we get $\varphi(f(x_1),f(x_2))\geq f(x_1)-f(x_2)$. Hence (3.2) implies that 
 \begin{equation*}
 f'(\xi)\geq \frac{\varphi(f(x_1),f(x_2))}{f(x_1)-f(x_2)}f'(\eta)\geq f'(\eta).\eqno(3.3)
 \end{equation*}
 \end{proof}

 \begin{thm}
 Let $f:I\rightarrow\mathbb{R}$ be a differentiable $\varphi$-convex function. Then for each $x,y,z\in I$ such that $x<y<z$ the following inequality holds:
 $$f'(y)+f'(z)\leq \frac{\varphi(f(x),f(y))+\varphi(f(y),f(z))}{x-z}.$$
 \end{thm}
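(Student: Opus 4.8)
The plan is to reduce the claim to two one-sided derivative estimates, one localized on the subinterval $(x,y)$ and one on $(y,z)$, and then to add them and recombine the resulting difference quotients over the common length $x-z=(x-y)+(y-z)$. So the first task is to extract from the basic inequality (3.1) a pointwise bound for the derivative at a right endpoint. Applying (3.1) on an interval with endpoints $x_1<x_2$ to an interior point $w$ and letting $w\to x_2^{-}$ (the same mean-value set-up used in Theorem \ref{th1}, but now passing to the limit so that the difference quotient $\frac{f(x_2)-f(w)}{x_2-w}$ becomes the derivative) yields $f'(x_2)\geq \frac{\varphi(f(x_1),f(x_2))}{x_1-x_2}$.

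Specializing this to the pair $(x,y)$ and to the pair $(y,z)$ produces the two inequalities $f'(y)\geq \frac{\varphi(f(x),f(y))}{x-y}$ and $f'(z)\geq \frac{\varphi(f(y),f(z))}{y-z}$. Adding them gives $f'(y)+f'(z)\geq \frac{\varphi(f(x),f(y))}{x-y}+\frac{\varphi(f(y),f(z))}{y-z}$, and the remaining step is to rewrite the right-hand side over the single denominator $x-z$. To keep control of the individual numerators I would bring in the auxiliary relation obtained by setting $t=1$ in (1.1), namely $\varphi(f(a),f(b))\geq f(a)-f(b)$, which fixes the signs of $\varphi(f(x),f(y))$ and $\varphi(f(y),f(z))$ and thereby lets me compare each quotient with its value taken over $x-z$.

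The main obstacle is precisely this final merging step, and it is more than a formal cancellation. The two denominators $x-y$ and $y-z$ are both negative, so passing to the common denominator $x-z$ reverses inequalities and forces a careful sign analysis: one must verify that the mediant-type combination $\frac{\varphi(f(x),f(y))}{x-y}+\frac{\varphi(f(y),f(z))}{y-z}$ is comparable, in exactly the direction demanded by the statement, to $\frac{\varphi(f(x),f(y))+\varphi(f(y),f(z))}{x-z}$. I expect this sign bookkeeping, rather than the derivative estimates themselves, to be the delicate point, since the ordering $x<y<z$ enters only through the signs of the three quantities $x-y$, $y-z$, $x-z$ and must be used together with the $\varphi$-convexity bound above to land the inequality in the stated form.
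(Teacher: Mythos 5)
Your two derivative estimates are correct as far as they go: letting the interior point tend to the right endpoint in (3.1) does give $f'(y)\geq \varphi(f(x),f(y))/(x-y)$ and $f'(z)\geq \varphi(f(y),f(z))/(y-z)$, and these are in substance the same facts the paper extracts by dividing its two $\varphi$-convexity inequalities by $t$ and letting $t\to 0^{+}$, which yields $f'(y)(x-y)+f'(z)(y-z)\leq \varphi(f(x),f(y))+\varphi(f(y),f(z))$; dividing that termwise by the negative factors $x-y$ and $y-z$ recovers exactly your two lower bounds. The fatal problem is the direction of your final chain. Everything you have established is a \emph{lower} bound on $f'(y)+f'(z)$, while the theorem asserts an \emph{upper} bound, and no comparison between $S=\frac{a}{p}+\frac{b}{q}$ and $T=\frac{a+b}{p+q}$ can convert $f'(y)+f'(z)\geq S$ into $f'(y)+f'(z)\leq T$. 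Concretely, with $a=\varphi(f(x),f(y))$, $b=\varphi(f(y),f(z))$, $p=x-y<0$, $q=y-z<0$, one computes
$$\frac{a}{p}+\frac{b}{q}-\frac{a+b}{p+q}=\frac{aq^{2}+bp^{2}}{pq(p+q)},$$
whose denominator is negative. So if $a,b\geq 0$ the mediant comparison goes the useless way ($S\leq T$, which does not chain with $f'(y)+f'(z)\geq S$), and if $a,b\leq 0$ it chains to give $f'(y)+f'(z)\geq T$, the \emph{reverse} of the stated inequality. The ``delicate sign bookkeeping'' you defer to the end is therefore not bookkeeping but an obstruction: your route cannot produce the claimed $\leq$.

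For comparison, the paper sidesteps your merging step by keeping the combined form $f'(y)(x-y)+f'(z)(y-z)\leq \varphi(f(x),f(y))+\varphi(f(y),f(z))$ and then replacing both $x-y$ and $y-z$ by the smaller quantity $x-z$ before dividing through by $x-z$. Note that this replacement is itself only valid when $f'(y)\geq 0$ and $f'(z)\geq 0$, and the final division by the negative number $x-z$ ought to reverse the inequality; so the directional difficulty you ran into is genuinely present in this statement and is not something a more careful execution of your plan would have dissolved. If you want a salvageable conclusion along your lines, what your estimates actually support (under a sign hypothesis on $\varphi$) is the inequality with $\geq$ in place of $\leq$.
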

 \begin{proof}
 Since $f$ is $\varphi$-convex in each interval $[x,y]$ and $[y,z],$ hence
 $$f(tx+(1-t)y)\leq f(y)+t\varphi(f(x),f(y)),$$and
 $$f(ty+(1-t)z)\leq f(z)+t\varphi(f(y),f(z)).$$
 From the above two inequalities, we obtain
  $$\frac{f(tx+(1-t)y)- f(y)+f(ty+(1-t)z)-f(z)}{t}\leq \varphi(f(x),f(y))+\varphi(f(y),f(z)).$$
  Now setting $t\rightarrow 0,$ we have
  $$f'(y)(x-y)+f'(z)(y-z)\leq \varphi(f(x),f(y))+\varphi(f(y),f(z)).$$
 Again $z>y$ implies that $x-z<x-y$ and $x<y$ implies that $x-z<y-z$.
  Thus we get $(x-z)(f'(y)+f'(z))\leq f'(y)(x-y)+f'(z)(y-z).$\\
  Hence we obtain
  $$f'(y)+f'(z)\leq \frac{\varphi(f(x),f(y))+\varphi(f(y),f(z))}{x-z}.$$
 \end{proof}
 
 \section{\textbf{Properties of geodesic $\varphi$-convex functions}}
 \begin{thm}
Let  $A$ be a totally convex set in $M$. Then a function $f:A\rightarrow\mathbb{R}$ is geodesic $\varphi$-convex if and only if for each $x,y\in A$, the function $g_{xy}=f\circ\alpha_{xy}$ is $\varphi$-convex on $[0,1]$.
 \end{thm}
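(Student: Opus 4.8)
The plan is to prove both implications directly from the definitions, the only nontrivial geometric input being that an affine reparametrization of a sub-arc of a geodesic is again a geodesic. Throughout I would use that $\alpha_{xy}(0)=x$ and $\alpha_{xy}(1)=y$, so that $g_{xy}(0)=f(x)$ and $g_{xy}(1)=f(y)$.

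For the easy implication, assume each $g_{xy}$ is $\varphi$-convex on $[0,1]$ and deduce that $f$ is geodesic $\varphi$-convex. Here I would simply apply the $\varphi$-convexity inequality (1.1) to $g_{xy}$ with the interval endpoints $1$ and $0$ (playing the roles of the two points) and parameter $t$. Since $t\cdot 1+(1-t)\cdot 0=t$, this reads $g_{xy}(t)\le g_{xy}(0)+t\,\varphi(g_{xy}(1),g_{xy}(0))$, which is exactly $f(\alpha_{xy}(t))\le f(x)+t\,\varphi(f(y),f(x))$, the defining inequality of geodesic $\varphi$-convexity. No geometry is needed in this direction.

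For the converse, assume $f$ is geodesic $\varphi$-convex and deduce that each $g_{xy}$ is $\varphi$-convex. Fix $x,y\in A$, take $s_1,s_2,t\in[0,1]$, and set $p=\alpha_{xy}(s_2)$ and $q=\alpha_{xy}(s_1)$. Since $A$ is totally convex, the whole arc $\alpha_{xy}$ lies in $A$, so $p,q\in A$. The key step is to identify the geodesic $\alpha_{pq}$ joining $p$ to $q$: the curve $\tau\mapsto\alpha_{xy}((1-\tau)s_2+\tau s_1)$ sends $\tau=0$ to $p$ and $\tau=1$ to $q$, and because the geodesic equation $\nabla_{\dot\alpha}\dot\alpha=0$ is invariant under the affine change of variable $\tau\mapsto(1-\tau)s_2+\tau s_1$, this curve is a geodesic. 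Substituting $\alpha_{pq}(t)=\alpha_{xy}((1-t)s_2+t s_1)=\alpha_{xy}(ts_1+(1-t)s_2)$ into the geodesic $\varphi$-convex inequality $f(\alpha_{pq}(t))\le f(p)+t\,\varphi(f(q),f(p))$ and rewriting in terms of $g_{xy}$ gives $g_{xy}(ts_1+(1-t)s_2)\le g_{xy}(s_2)+t\,\varphi(g_{xy}(s_1),g_{xy}(s_2))$, precisely the $\varphi$-convexity of $g_{xy}$ on $[0,1]$.

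The main obstacle is this reparametrization step in the converse: one must be sure that the sub-arc of $\alpha_{xy}$ between parameters $s_2$ and $s_1$, after the affine reparametrization, really is the geodesic $\alpha_{pq}$ appearing in the definition. This rests on the invariance of $\nabla_{\dot\alpha}\dot\alpha=0$ under affine reparametrization (so the curve is a geodesic with the correct endpoints), together with the implicit choice of $\alpha_{pq}$ made consistently with $\alpha_{xy}$; everything else is a substitution. I expect no genuine difficulty once this identification is recorded.
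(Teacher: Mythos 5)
Your proposal is correct and follows essentially the same route as the paper: the forward implication by specializing the $\varphi$-convexity of $g_{xy}$ to the endpoints $0$ and $1$, and the converse by affinely reparametrizing the sub-arc of $\alpha_{xy}$ between two parameter values and applying the geodesic $\varphi$-convexity of $f$ to that geodesic. Your explicit remark that the reparametrized sub-arc must be taken as the geodesic $\alpha_{pq}$ in the definition is a point the paper passes over silently, but the argument is the same.
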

 \begin{proof}
 Let us suppose that $g_{xy}$ is $\varphi$-convex on $[0,1]$. Then for each $t_1,t_2\in [0,1]$,
 $$g_{xy}(st_2+(1-s)t_1)\leq g_{xy}(t_1)+s\varphi(g_{xy}(t_2),g_{xy}(t_1)),\ \forall s\in [0,1].$$
 Now taking $t_1=0$ and $t_2=1$ we get
 $$g_{xy}(s)\leq g_{xy}(0)+s\varphi(g_{xy}(1),g_{xy}(0)),$$
 i.e., $$f(\alpha_{xy}(s))\leq f(x)+s\varphi(f(y),f(x)),\forall x,y\in A \text{ and }\forall s\in [0,1].$$
 Conversely, let $f$ be a $\varphi$-convex function. Now restricting the domain of $\alpha_{xy}$ to $[t_1,t_2]$, we get a geodesic joining $\alpha_{xy}(t_1)$ and $\alpha_{xy}(t_2)$. Now reparametrize this restriction,
 $$\gamma(s)=\alpha_{xy}(st_2+(1-s)t_1),\ s\in [0,1].$$
 Since $f(\gamma(s))\leq f(\gamma(0))+s\varphi(f(\gamma(1)),f(\gamma(0))),$\\
 i.e., $f(\alpha_{xy}(st_2+(1-s)t_1))\leq f(\alpha_{xy}(t_1))+s\varphi(f(\alpha_{xy}(t_2)),f(\alpha_{xy}(t_0))),$\\
 hence $g_{xy}(st_2+(1-s)t_1))\leq g_{xy}(t_1)+s\varphi(g_{xy}(t_2),g_{xy}(t_1)).$\\
 So, $g_{xy}$ is $\varphi$-convex in $[0,1]$.
 \end{proof}

 \begin{thm}
 Let $A\subset M$ be a totally convex set, $f:A\rightarrow\mathbb{R}$ geodesic convex function and $g:I\rightarrow\mathbb{R}$ be non-decreasing $\varphi$-convex function with $Range(f)\subseteq I$. Then $g\circ f:A\rightarrow\mathbb{R}$ is a geodesic $\varphi$-convex function.
 \end{thm}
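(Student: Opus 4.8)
The plan is to chain the three hypotheses together in a single computation, using the monotonicity of $g$ as the bridge between the geodesic convexity of $f$ and the $\varphi$-convexity of $g$. Fix $x,y\in A$ and $t\in[0,1]$, and write $h=g\circ f$. Since $A$ is totally convex, the geodesic point $\alpha_{xy}(t)$ again belongs to $A$, so $f(\alpha_{xy}(t))\in Range(f)\subseteq I$ and $h(\alpha_{xy}(t))$ is well defined; I would record this domain check first, as it is what licenses every subsequent application of $g$.

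First I would invoke the geodesic convexity of $f$ to bound the value at the geodesic point,
$$f(\alpha_{xy}(t))\leq (1-t)f(x)+tf(y).$$
Because $f(x),f(y)\in I$ and $I$ is an interval, the convex combination $(1-t)f(x)+tf(y)$ lies in $I$ as well, so $g$ is defined there. Applying the non-decreasing function $g$ to both sides preserves the inequality, which gives
$$g\big(f(\alpha_{xy}(t))\big)\leq g\big((1-t)f(x)+tf(y)\big).$$

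Next I would rewrite the right-hand argument as $t\,f(y)+(1-t)\,f(x)$ and apply the $\varphi$-convexity of $g$ with the first slot taken by $f(y)$ and the second by $f(x)$, obtaining
$$g\big(t\,f(y)+(1-t)\,f(x)\big)\leq g(f(x))+t\,\varphi\big(g(f(y)),g(f(x))\big).$$
Concatenating the last two displays yields exactly
$$h(\alpha_{xy}(t))\leq h(x)+t\,\varphi\big(h(y),h(x)\big),$$
which is the defining inequality for $g\circ f$ to be geodesic $\varphi$-convex, completing the argument.

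There is no deep obstacle here; the proof is essentially a bookkeeping exercise. The one point that requires care is the alignment of the convex-combination weights when passing from geodesic convexity, where the weight $t$ multiplies $f(y)$, to the $\varphi$-convex inequality for $g$, whose standard form places $t$ on the first argument of $\varphi$. One must keep the base point $f(x)$ matched to the second argument of $\varphi$ throughout, and the non-decreasing hypothesis on $g$ is precisely what is needed to apply $g$ monotonically in the middle step.
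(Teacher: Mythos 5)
Your proposal is correct and follows essentially the same route as the paper's own proof: apply geodesic convexity of $f$, use monotonicity of $g$ to pass to $g\bigl((1-t)f(x)+tf(y)\bigr)$, then invoke the $\varphi$-convexity of $g$. The extra care you take with the domain check and with matching $f(x)$ to the second slot of $\varphi$ is a welcome (and correct) elaboration of steps the paper leaves implicit.
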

 \begin{proof}
 Since $f$ is geodesic convex in $A$, hence for any $x,y\in A$ 
 $$f(\alpha_{xy}(t))\leq (1-t)f(x)+tf(y),$$
 where $\alpha_{xy}:[0,1]\rightarrow M$ is a geodesic arc connecting $x$ and $y$.
 Now as $g$ is non-decreasing and $\varphi$-convex, we have
 \begin{eqnarray*}
 g\circ f(\alpha_{xy}(t))
 &\leq& g((1-t)f(x)+tf(y))\\
 &\leq & g\circ f(x)+t\varphi(g\circ f(y),g\circ f(x)).
 \end{eqnarray*}
 Hence $g\circ f$ is geodesic $\varphi$-convex in $A$.
 \end{proof}

 {\bf Remark.}  In Theorem 4.2, if $g$ is strictly $\varphi$-convex, then $g\circ f$ is also strictly geodesic $\varphi$-convex.

 \begin{thm}
 Let $f_i:A\rightarrow\mathbb{R}$ be geodesic $\varphi$-convex functions for $i=1,2,\cdots,n$ and $\varphi$ be nonnegatively linear. Then for $\lambda_i\geq 0,\ i=1,2,\cdots,n$, the function $f=\sum_{i=1}^{n}\lambda_if_i:A\rightarrow\mathbb{R}$ is geodesic $\varphi$-convex.
 \end{thm}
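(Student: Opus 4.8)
The plan is to derive the defining inequality for geodesic $\varphi$-convexity of $f=\sum_{i=1}^{n}\lambda_i f_i$ directly from the corresponding inequalities for the $f_i$, using the two structural hypotheses on $\varphi$ in separate roles: nonnegative homogeneity to absorb each scalar $\lambda_i$ into $\varphi$, and additivity to collapse the resulting sum of $\varphi$-values into a single one. Fix arbitrary $x,y\in A$, the geodesic arc $\alpha_{xy}$ joining them, and $t\in[0,1]$; the whole argument is pointwise in $t$, so no geodesic- or curvature-specific input is needed beyond the existence of $\alpha_{xy}$ guaranteed by total convexity of $A$.

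First I would write the geodesic $\varphi$-convexity inequality for each index,
$$f_i(\alpha_{xy}(t))\leq f_i(x)+t\varphi(f_i(y),f_i(x)),$$
multiply through by $\lambda_i\geq 0$ (so the inequality direction is preserved), and invoke nonnegative homogeneity to pull the scalar inside the bifunction, $\lambda_i\varphi(f_i(y),f_i(x))=\varphi(\lambda_i f_i(y),\lambda_i f_i(x))$. This turns each inequality into
$$\lambda_i f_i(\alpha_{xy}(t))\leq \lambda_i f_i(x)+t\varphi(\lambda_i f_i(y),\lambda_i f_i(x)).$$
Summing the $n$ inequalities, the left-hand side becomes $f(\alpha_{xy}(t))$ and the first right-hand term becomes $f(x)$, leaving $\sum_{i=1}^n\varphi(\lambda_i f_i(y),\lambda_i f_i(x))$ to be identified with $\varphi(f(y),f(x))$. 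This last identification is precisely where additivity is used: since the stated additivity property combines only two pairs of arguments at a time, I would first note (by an immediate induction on $n$) that it extends to any finite family, yielding
$$\sum_{i=1}^{n}\varphi(\lambda_i f_i(y),\lambda_i f_i(x))=\varphi\!\left(\sum_{i=1}^{n}\lambda_i f_i(y),\sum_{i=1}^{n}\lambda_i f_i(x)\right)=\varphi(f(y),f(x)).$$
Substituting this back gives $f(\alpha_{xy}(t))\leq f(x)+t\varphi(f(y),f(x))$ for all admissible $x,y,t$, which is exactly the definition of geodesic $\varphi$-convexity of $f$.

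The computation is entirely routine and presents no genuine obstacle. The only points warranting a little care are the finite extension of additivity from two pairs to $n$ pairs, and the verification that the hypothesis $\lambda_i\geq 0$ is what legitimizes both the scaling step and the homogeneity identity $\varphi(\lambda_i x,\lambda_i y)=\lambda_i\varphi(x,y)$; were any $\lambda_i$ negative, neither the inequality direction nor the homogeneity rule would apply, so nonnegativity of the weights is essential rather than incidental.
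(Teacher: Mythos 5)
Your proposal is correct and follows essentially the same route as the paper: apply the defining inequality to each $f_i$, scale by $\lambda_i\geq 0$, use nonnegative homogeneity to absorb $\lambda_i$ into $\varphi$, and then use additivity to collapse the sum into $\varphi(f(y),f(x))$. If anything, your write-up is more careful than the paper's, since you make explicit the inductive extension of additivity to $n$ pairs and the role of nonnegativity of the weights, both of which the paper leaves implicit.
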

 \begin{proof}
 Let $x,y\in A$ then 
 \begin{eqnarray*}
 f(\alpha_{xy})&=&\sum_{i=1}^{n}\lambda_if_i(\alpha_{xy})\\
 &\leq & \sum_{i=1}^{n}\lambda_i[f_i(x)+t \varphi(f(y),f(x))]\\
 &= & \sum_{i=1}^{n}[\lambda_if_i(x)+t \varphi(\lambda_if(y),\lambda_if(x))]\\
 &=& f(x)+t\varphi(f(y),f(x)).
 \end{eqnarray*}
 Hence $f$ is geodesic $\varphi$-convex in $A$.
 \end{proof}

  Let $M$ and $N$ be two complete Riemannian manifolds and $\nabla$ be the Levi-Civita connection of $M$. If $F:M\rightarrow N$ be a diffeomorphism, then  $F_*\nabla=\nabla_1$ is an affine connection of $N$. Hence if $\gamma$ is a geodesic in $(M,\nabla)$, then $F\circ\gamma$ is also a geodesic in $(N,\nabla_1)$ \cite[pp. 66]{UDR94}.
  \begin{thm}
  Let $f:A\rightarrow\mathbb{R}$ be a geodesic $\varphi$-convex function. If $F:(M,\nabla)\rightarrow (N,\nabla_1)$ is a differmorphism then $f\circ F^{-1}:F(A)\rightarrow\mathbb{R}$ is a geodesic $\varphi$-convex function, where $\nabla_1=F_*\nabla$.
  \end{thm}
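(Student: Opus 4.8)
The plan is to verify the defining inequality of geodesic $\varphi$-convexity for $g := f\circ F^{-1}$ directly, by transporting the geodesics of $(N,\nabla_1)$ back to geodesics of $(M,\nabla)$ through the diffeomorphism. Throughout I would use the fact recalled just before the statement: a curve $\gamma$ is a $\nabla$-geodesic if and only if $F\circ\gamma$ is a $\nabla_1$-geodesic.

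First I would confirm that $F(A)$ is totally convex in $(N,\nabla_1)$, so that the conclusion is even meaningful. Given $u,v\in F(A)$, set $x=F^{-1}(u)$ and $y=F^{-1}(v)$, which lie in $A$. Since $A$ is totally convex, the geodesic arc $\alpha_{xy}$ of $(M,\nabla)$ lies in $A$; pushing it forward, $F\circ\alpha_{xy}$ is a $\nabla_1$-geodesic joining $u$ and $v$ and contained in $F(A)$. Hence $F(A)$ is totally convex.

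Next I would set up the geodesic correspondence. Because $F$ is a diffeomorphism, $F^{-1}:(N,\nabla_1)\to(M,\nabla)$ is again a diffeomorphism with $(F^{-1})_*\nabla_1=\nabla$, so $F^{-1}$ carries every $\nabla_1$-geodesic to a $\nabla$-geodesic. Thus if $\beta_{uv}$ denotes the $\nabla_1$-geodesic from $u$ to $v$, then $\alpha_{xy}:=F^{-1}\circ\beta_{uv}$ is the $\nabla$-geodesic from $x$ to $y$, and $\beta_{uv}=F\circ\alpha_{xy}$ with $\beta_{uv}(0)=u$ and $\beta_{uv}(1)=v$.

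Finally I would substitute and read off the inequality. For $t\in[0,1]$,
$$g(\beta_{uv}(t))=f\bigl(F^{-1}(\beta_{uv}(t))\bigr)=f(\alpha_{xy}(t))\leq f(x)+t\varphi(f(y),f(x)),$$
where the inequality is the geodesic $\varphi$-convexity of $f$ on $A$. Since $f(x)=g(u)$ and $f(y)=g(v)$, this becomes $g(\beta_{uv}(t))\leq g(u)+t\varphi(g(v),g(u))$, which is exactly the definition of geodesic $\varphi$-convexity of $g$ on $F(A)$. The only point requiring genuine care is the converse direction of the geodesic correspondence — that every $\nabla_1$-geodesic pulls back to a $\nabla$-geodesic — which is where the invertibility of $F$ enters; after that the proof is a single substitution, and the remaining care is purely bookkeeping of endpoints (here $\alpha_{xy}(0)=x$, $\alpha_{xy}(1)=y$, with the inequality anchored at $x$).
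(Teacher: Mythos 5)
Your proof is correct and follows essentially the same route as the paper: use the fact that $F$ carries geodesics to geodesics, note that $F(A)$ is totally convex, and then a single substitution transfers the $\varphi$-convexity inequality from $f$ to $f\circ F^{-1}$. The only cosmetic difference is that you pull an arbitrary geodesic of $F(A)$ back to $A$ whereas the paper pushes a geodesic of $A$ forward; given the bijective geodesic correspondence these are the same argument.
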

 \begin{proof}
 Let $x,y\in A$ and $\alpha_{xy}$ be a geodesic arc joining $x$ and $y$. Since $F$ is a diffeomorphism, hence $F(A)$  is totally geodesic \cite{UDR94} and $F\circ\alpha_{xy}$  is a geodesic joining $F(x)$ and $F(y)$. Now we have
 \begin{eqnarray*}
 (f\circ F^{-1})(F(\alpha_{xy}))&=& f(\alpha_{xy})\\
 &\leq & f(x)+t\varphi(f(y),f(x))\\
 &=& (f\circ F^{-1})(F(x))+t\varphi((f\circ F^{-1})(F(y)),(f\circ F^{-1})(F(x))))
 \end{eqnarray*}
  i,e., $f\circ F^{-1}$ is geodesic $\varphi$-convex on $F(A)$.
 \end{proof}

 \begin{thm}\label{thm3}
  Let $f:B\rightarrow\mathbb{R}$ be a geodesic $\varphi$-convex function and $\varphi$ be bounded from above on $f(B)\times f(B)$ with an upper bound $M_\varphi$, where $B$ is a convex set in $\mathbb{R}^n$ with $Int(B)\neq \phi$. Then $f$ is continuous on $Int(B)$.
 \end{thm}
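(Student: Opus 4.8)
The plan is to reduce the statement to the Euclidean situation and then run the standard chord argument that shows a convex function is locally Lipschitz on the interior of its domain, with the uniform bound $M_\varphi$ on $\varphi$ playing the role that the oscillation of $f$ plays in the classical proof. Since $B\subseteq\mathbb{R}^n$ is convex it is totally convex, and the geodesics of $\mathbb{R}^n$ are the straight segments $\alpha_{xy}(t)=(1-t)x+ty$; hence geodesic $\varphi$-convexity reads $f((1-t)x+ty)\le f(x)+t\varphi(f(y),f(x))$ for all $x,y\in B$ and $t\in[0,1]$. A preliminary observation is that $M_\varphi\ge 0$: putting $y=x$ and $t=1$ in this inequality gives $\varphi(f(x),f(x))\ge 0$, and since $(f(x),f(x))\in f(B)\times f(B)$ we obtain $M_\varphi\ge\varphi(f(x),f(x))\ge 0$. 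This sign is exactly what makes the estimate below meaningful.

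First I would fix $x_0\in\mathrm{Int}(B)$ and choose $r>0$ with $\overline{B(x_0,r)}\subseteq B$, possible because $x_0$ is an interior point of the convex set $B$. I then claim $f$ is Lipschitz on $B(x_0,r/2)$. Given distinct $x,y\in B(x_0,r/2)$, set $d=|y-x|$ and push $y$ away from $x$ to
$$z=y+\frac{r}{2d}(y-x),$$
so that $|z-y|=r/2$ and therefore $z\in B(x_0,r)\subseteq B$. The three points are collinear with $y$ between $x$ and $z$, and a short computation gives $y=(1-\lambda)x+\lambda z$ with $\lambda=\frac{2d}{2d+r}$. Applying the defining inequality along the segment $\alpha_{xz}$ at parameter $\lambda$ yields
$$f(y)-f(x)\le\lambda\,\varphi(f(z),f(x))\le\lambda M_\varphi\le\frac{2d}{r}\,M_\varphi=\frac{2M_\varphi}{r}\,|y-x|,$$
where the last bound on $\lambda$ uses $M_\varphi\ge 0$. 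Interchanging the roles of $x$ and $y$, that is, pushing $x$ away from $y$ to a point $z'\in B(x_0,r)$, produces the symmetric estimate $f(x)-f(y)\le\frac{2M_\varphi}{r}|x-y|$, whence $|f(x)-f(y)|\le\frac{2M_\varphi}{r}|x-y|$ throughout $B(x_0,r/2)$. Thus $f$ is locally Lipschitz, in particular continuous, near $x_0$; since $x_0\in\mathrm{Int}(B)$ was arbitrary, $f$ is continuous on $\mathrm{Int}(B)$.

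The only real subtlety, and the point that distinguishes this from the classical convex case, is the asymmetry of the geodesic $\varphi$-convexity inequality in its two arguments: the increment $f(y)-f(x)$ is controlled only when $x$ is taken as the base point $\alpha_{xz}(0)$, so the reverse increment must come from a second, independent chord-extension with $y$ as the base point. I expect the main thing to get right to be the bookkeeping of which point is the base, together with the verification that the extended points $z,z'$ stay inside $B(x_0,r)\subseteq B$, which the radius choice $r/2$ guarantees. It is worth emphasizing that, unlike in the usual treatment of convex functions, no separate argument for the local boundedness of $f$ is needed here: the uniform bound $M_\varphi$ on $\varphi$ already furnishes the one-sided control $f(y)\le f(x)+\lambda M_\varphi$ directly, and this is precisely what drives the Lipschitz estimate.
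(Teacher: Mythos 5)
Your proof is correct and follows essentially the same route as the paper's: extend the chord a fixed distance beyond $y$, apply the $\varphi$-convexity inequality with $x$ as base point, bound $\varphi$ by $M_\varphi$ to get a two-sided Lipschitz estimate, and symmetrize. Your explicit verification that $M_\varphi\geq 0$ (via $t=1$, $y=x$) is a small but genuine improvement, since both your estimate $\lambda M_\varphi\leq \frac{2d}{r}M_\varphi$ and the paper's analogous step silently require that sign.
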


  \begin{proof}
  Let $a\in Int(B)$. Then there exists an open ball $B(a,h)\subset Int(B)$ for some $h>0$. Now choose $r$ $(0<r<h)$ such that the closed ball $\bar{B}(a,r+\epsilon)\subset B(a,h)$ for some arbitrary small $\epsilon>0$. Choose any $x,y\in \bar{B}(a,r)$. Set $z=y+\frac{\epsilon}{\norm{y-x}}(y-x)$ and $t=\frac{\norm{y-x}}{\epsilon+\norm{y-x}}$. Then it is obvious that $z\in \bar{B}(a,r+\epsilon)$ and $y=tz+(1-t)x$. Thus $f(y)\leq f(x)+t\varphi(f(z),f(x))\leq f(x)+tM_\varphi$. Hence we get
  $$f(y)-f(x)\leq tM_\varphi\leq \frac{\norm{y-x}}{\epsilon}M_\varphi=K\norm{y-x},$$
  where $K=M_\varphi/\epsilon$. And if we interchange the position of $x$ and $y$, then we also get $f(x)-f(y)\leq K\norm{y-x}$. Hence $|f(x)-f(y)|\leq K\norm{y-x}$. Since $\bar{B}(a,r)$ is arbitrary hence $f$ is continuous on $Int(B)$. 
  \end{proof}
\begin{defn}
A bifunction $\varphi:\mathbb{R}^2\rightarrow\mathbb{R}$ is called \textit{sequentially upper bounded} if
$$\sup_n\varphi(x_n,y_n)\leq \varphi(\sup_nx_n,\sup_ny_n)$$
for any two bounded real sequences $\{x_n\}$ and $\{y_n\}$.
\end{defn}
\begin{exmp}
The functions $\varphi(x,y)=x+y$ and $\psi(x,y)=xy, \ \forall (x,y)\in\mathbb{R}^2$ are sequentially upper bounded functions.
\end{exmp}
\begin{prop}
Let $A\subseteq M$ be totally convex set and $\{f_n\}_{n\in \mathbb{N}}$ be a non-empty family of geodesic $\varphi$-convex functions on $A$, where $\varphi$ is sequentially upper bounded. If $\sup_nf_n(x)$ exists for each $x\in A$ then $f(x)=\sup_nf_n(x)$ is also a geodesic $\varphi$-convex function.
\end{prop}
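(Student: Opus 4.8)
The plan is to reduce the geodesic $\varphi$-convexity of the supremum $f$ to the corresponding inequalities for the individual members $f_n$, and then transfer the supremum through the defining inequality by means of the sequential upper boundedness of $\varphi$. Fix arbitrary points $x,y\in A$, a geodesic arc $\alpha_{xy}$ joining them, and a parameter $t\in[0,1]$. For each $n\in\mathbb{N}$, geodesic $\varphi$-convexity of $f_n$ gives
$$f_n(\alpha_{xy}(t))\leq f_n(x)+t\,\varphi(f_n(y),f_n(x)).$$
Taking the supremum over $n$ of the left-hand side produces $f(\alpha_{xy}(t))$ by definition of $f$, so the whole task collapses to bounding $\sup_n\big[f_n(x)+t\,\varphi(f_n(y),f_n(x))\big]$ from above by $f(x)+t\,\varphi(f(y),f(x))$.

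First I would use the subadditivity of the supremum, namely $\sup_n(a_n+b_n)\leq \sup_n a_n+\sup_n b_n$, together with the fact that the nonnegative scalar $t$ may be pulled outside a supremum, to obtain
$$\sup_n\big[f_n(x)+t\,\varphi(f_n(y),f_n(x))\big]\leq \sup_n f_n(x)+t\,\sup_n\varphi(f_n(y),f_n(x))=f(x)+t\,\sup_n\varphi(f_n(y),f_n(x)).$$
Note that the inequality points in exactly the direction we want, since we are pursuing an upper bound, so nothing is lost by splitting the supremum of the sum. It then remains only to control the single supremum $\sup_n\varphi(f_n(y),f_n(x))$.

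The key step is to invoke the hypothesis that $\varphi$ is sequentially upper bounded. Applying its defining inequality to the sequences $x_n=f_n(y)$ and $y_n=f_n(x)$ yields
$$\sup_n\varphi(f_n(y),f_n(x))\leq \varphi\big(\sup_n f_n(y),\,\sup_n f_n(x)\big)=\varphi(f(y),f(x)).$$
Chaining the three displays gives $f(\alpha_{xy}(t))\leq f(x)+t\,\varphi(f(y),f(x))$, and since $x,y\in A$ and $t\in[0,1]$ were arbitrary, this is precisely the geodesic $\varphi$-convexity of $f$.

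The one point that genuinely requires care — and which I expect to be the main (though minor) obstacle — is the applicability of the definition of a sequentially upper bounded bifunction, which is formulated only for \emph{bounded} sequences. The standing assumption that $\sup_n f_n(x)$ exists for every $x\in A$ guarantees that $\{f_n(x)\}$ and $\{f_n(y)\}$ are bounded above; I would therefore either read boundedness from below into the hypotheses or else circumvent it by applying the defining inequality to suitably truncated sequences and passing to the limit. Once this technicality is settled, the argument is a short three-line chain of inequalities with no further analytic input.
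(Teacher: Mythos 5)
Your proof is correct and follows essentially the same route as the paper's own argument: apply the defining inequality to each $f_n$, take the supremum, split $\sup_n(a_n+b_n)\leq\sup_n a_n+\sup_n b_n$, pull out the factor $t$, and invoke sequential upper boundedness of $\varphi$ on the sequences $f_n(y)$ and $f_n(x)$. The technicality you flag --- that the definition of sequential upper boundedness is stated for bounded sequences while the hypothesis only guarantees boundedness above --- is a genuine observation, but the paper's proof passes over it silently as well.
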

\begin{proof}
Let $x,y\in A$ and $\alpha_{xy}:[0,1]\rightarrow M$ be a geodesic connecting $x$ and $y$. Then 
\begin{eqnarray*}
f(\alpha_{xy}(t))&=&\sup_nf_n(\alpha_{xy}(t))\\
&\leq & \sup_n\{f_n(x)+t\varphi(f_n(y),f_n(x))\\
&\leq & \sup_nf_n(x)+t\sup_n\varphi(f_n(y),f_n(x))\\
&\leq & \sup_nf_n(x)+t\varphi(\sup_n f_n(y),\sup_n f_n(x))\\
&=& f(x)+t\varphi(f(y),f(x))
\end{eqnarray*}
Hence $f$ is a geodesic $\varphi$-convex function on $A$.
\end{proof}
\begin{thm}
Let $f:A\rightarrow\mathbb{R}$ be a geodesic $\varphi$-convex function. If $f$ has local minimum at $x_0\in Int(A),$ then $\varphi(f(x),f(x_0))\geq 0$ for all $x\in A$.
\end{thm}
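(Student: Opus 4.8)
The plan is to exploit the defining inequality of geodesic $\varphi$-convexity along the geodesic emanating \emph{from} $x_0$, and to play it off against the local minimality of $x_0$. First I would fix an arbitrary point $x\in A$ and consider the geodesic arc $\alpha_{x_0x}:[0,1]\to M$ with $\alpha_{x_0x}(0)=x_0$ and $\alpha_{x_0x}(1)=x$; since $A$ is totally convex this arc lies entirely in $A$, so $f$ is defined along it. Applying the definition of geodesic $\varphi$-convex function to the pair $(x_0,x)$, where $x_0$ occupies the base-point slot, gives for every $t\in[0,1]$ the inequality
$$f(\alpha_{x_0x}(t))\leq f(x_0)+t\,\varphi(f(x),f(x_0)).$$

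Next I would invoke the hypothesis that $f$ attains a local minimum at $x_0\in Int(A)$: there is a neighborhood $U$ of $x_0$ on which $f(x_0)\leq f(z)$ for all $z\in U$. Because $\alpha_{x_0x}$ is a smooth (hence continuous) curve starting at $x_0$, there exists $\delta>0$ such that $\alpha_{x_0x}(t)\in U$ for all $t\in[0,\delta)$, and for such $t$ local minimality yields $f(x_0)\leq f(\alpha_{x_0x}(t))$. Combining this with the convexity inequality above, for every $t\in(0,\delta)$ I obtain
$$f(x_0)\leq f(\alpha_{x_0x}(t))\leq f(x_0)+t\,\varphi(f(x),f(x_0)),$$
whence $0\leq t\,\varphi(f(x),f(x_0))$. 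Dividing by $t>0$ gives $\varphi(f(x),f(x_0))\geq 0$, and since $x\in A$ was arbitrary the conclusion follows.

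The argument is essentially a one-line consequence of the definition once the geodesic issuing from $x_0$ is chosen, so there is no serious analytic obstacle. The only point requiring genuine care is the elementary topological step that the geodesic remains inside the minimality neighborhood $U$ for small parameter values; this is guaranteed by the continuity of $\alpha_{x_0x}$ together with $\alpha_{x_0x}(0)=x_0$, which makes a genuine interval $[0,\delta)$ of admissible parameters available (the condition $x_0\in Int(A)$ ensures such a neighborhood exists within the domain). I would also emphasise that the hypotheses impose no sign or structural restriction on $\varphi$ itself: the nonnegativity of $\varphi(f(x),f(x_0))$ is forced entirely by the interplay between the convexity estimate along the geodesic and the local minimum condition.
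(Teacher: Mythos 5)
Your proof is correct and follows essentially the same route as the paper: apply the geodesic $\varphi$-convexity inequality along $\alpha_{x_0x}$, use local minimality to bound $f(x_0)\leq f(\alpha_{x_0x}(t))$ for small $t>0$, and divide by $t$. The only cosmetic difference is that the paper fixes a single parameter $\xi\in(0,1]$ with $\alpha_{x_0x}(\xi)$ in the minimality neighborhood rather than a whole interval $(0,\delta)$; the argument is identical.
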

\begin{proof}
Since $x_0\in Int(A)$, $B(x_0,r)\subset A$ for some $r>0$. Take any point $x\in A$. Then there exists a geodesic $\alpha_{x_0 x}:[0,1]\rightarrow M$ belonging to $A$ and $f(\alpha_{x_0x}(t))\leq f(x_0)+t\varphi(f(x),f(x_0))$. Now there exists $\xi$ such that $0<\xi\leq 1$ and $\alpha_{x_0x}(t)\in B(x_0,r), \ \forall t\in [0,\xi].$\\
As $f$ has local minimum at $x_0$, we obtain
$$f(x_0)\leq f(\alpha_{x_0x}(\xi))\leq f(x_0)+\xi\varphi(f(x),f(x_0)).$$
Thus,
$$\varphi(f(x),f(x_0))\geq 0,\ \forall x\in A.$$
\end{proof}
\begin{thm}
Let $f:A\rightarrow\mathbb{R}$ be geodesic $\varphi$-convex and $\varphi$ be bounded from above on $f(A)\times f(A)$ with an upper bound $M_\varphi$. Then $f$ is continuous on $Int(A)$. 
\end{thm}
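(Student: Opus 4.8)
The plan is to mimic the Euclidean argument of Theorem \ref{thm3}, replacing straight line segments by geodesics and the Euclidean norm by the Riemannian distance $d$, and to conclude that $f$ is locally Lipschitz, hence continuous, on $Int(A)$. The whole point is to reproduce the collinearity trick $y=tz+(1-t)x$ in a curved setting, where it becomes the statement that $y$ lies on a geodesic joining $x$ to a suitably prolonged point $z$.

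First I would fix an arbitrary $a\in Int(A)$. Since $M$ is complete and $Int(A)$ is open, I can choose a geodesically convex, totally normal geodesic ball $B(a,h)\subset Int(A)$ in which any two points are joined by a unique minimizing geodesic remaining inside the ball; because $B(a,h)\subset Int(A)\subseteq A$ and $A$ is totally convex, these geodesic arcs lie in $A$, so the defining inequality of geodesic $\varphi$-convexity is available along them. Next I would pick $r$ with $0<r<h$ and a small $\epsilon>0$ so that the closed ball $\bar B(a,r+\epsilon)\subset B(a,h)$.

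The core construction is as follows. Given $x,y\in\bar B(a,r)$ with $x\neq y$, take the unit-speed minimizing geodesic from $x$ through $y$ and prolong it beyond $y$ by arc length $\epsilon$ to a point $z$. Then $d(y,z)=\epsilon$, so $d(a,z)\leq d(a,y)+d(y,z)\leq r+\epsilon$ and hence $z\in\bar B(a,r+\epsilon)\subset A$; moreover $x,y,z$ lie in order on a single minimizing geodesic, so reparametrizing it as $\alpha_{xz}:[0,1]\rightarrow M$ with $\alpha_{xz}(0)=x$, $\alpha_{xz}(1)=z$ gives $y=\alpha_{xz}(t)$ with $t=\frac{d(x,y)}{d(x,y)+\epsilon}$. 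Applying geodesic $\varphi$-convexity along $\alpha_{xz}$ together with the upper bound $M_\varphi$ then yields
$$f(y)\leq f(x)+t\,\varphi(f(z),f(x))\leq f(x)+tM_\varphi\leq f(x)+\frac{M_\varphi}{\epsilon}\,d(x,y).$$
Interchanging the roles of $x$ and $y$ and combining the two estimates gives $|f(x)-f(y)|\leq K\,d(x,y)$ with $K=M_\varphi/\epsilon$, so $f$ is Lipschitz near $a$ and hence continuous at $a$; as $a\in Int(A)$ was arbitrary, $f$ is continuous on $Int(A)$.

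The main obstacle I anticipate is geometric rather than analytic: in $\mathbb{R}^n$ the prolonged point $z=y+\frac{\epsilon}{\norm{y-x}}(y-x)$ and the collinearity $y=tz+(1-t)x$ are immediate, whereas on $M$ I must guarantee that the minimizing geodesic from $x$ to $y$ extends past $y$ as a minimizing geodesic all the way to $z$, so that $d(x,z)=d(x,y)+\epsilon$ and $y=\alpha_{xz}(t)$ at exactly the stated parameter $t$, and that $z$ never leaves $A$. This is precisely why the argument must be localized inside a totally normal, geodesically convex ball, where minimizing geodesics are unique, free of interior cut points, and extendable; the inclusion $\bar B(a,r+\epsilon)\subset B(a,h)\subset A$ then keeps all the relevant geodesics in $A$.
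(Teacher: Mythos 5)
Your proof is correct, but it takes a genuinely different route from the paper's. The paper localizes by taking a coordinate chart $(U,\psi)$ around $a\in Int(A)$, asserts that $f\circ\psi^{-1}$ is $\varphi$-convex on $\psi(U\cap Int(A))$, and then invokes the Euclidean continuity result (Theorem \ref{thm3}) to conclude. You instead work intrinsically: you redo the Lipschitz estimate of Theorem \ref{thm3} inside a convex normal ball, replacing the prolonged point $z=y+\frac{\epsilon}{\norm{y-x}}(y-x)$ by the geodesic extension of $\alpha_{xy}$ past $y$ by arc length $\epsilon$, and the collinearity $y=tz+(1-t)x$ by $y=\alpha_{xz}(t)$ with $t=\frac{d(x,y)}{d(x,y)+\epsilon}$. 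What your approach buys is significant: a chart is a diffeomorphism onto an open set of $\mathbb{R}^n$, but it does not carry geodesics of $M$ to straight lines, so $f\circ\psi^{-1}$ need not satisfy the straight-line inequality that the proof of Theorem \ref{thm3} actually uses; your intrinsic argument sidesteps this entirely, at the cost of having to justify the existence of totally normal convex balls (Whitehead) and the extendability and minimality of the prolonged geodesic, which you do correctly using completeness and the triangle inequality $d(a,z)\leq d(a,y)+\epsilon$. One small point, present equally in the paper's Euclidean proof: the step $tM_\varphi\leq\frac{d(x,y)}{\epsilon}M_\varphi$ tacitly uses $M_\varphi\geq 0$, which does hold since taking $y=x$ and $t=1$ in the defining inequality gives $\varphi(f(x),f(x))\geq 0$.
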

\begin{proof}
Let $a\in Int(A)$ and $(U,\psi)$ be a chart containing $a$. Since $\psi$ is a differmorphism so using Theorem \ref{thm3}, $f\circ\psi^{-1}:\psi(U\cap Int(A))\rightarrow\mathbb{R}$ is also $\varphi$-convex and hence continuous. Therefore, we get $f=f\circ\psi^{-1}\circ\psi:U\cap Int(A)\rightarrow\mathbb{R}$ is continuous. Since $a$ is arbitrary, $f$ is continuous on $Int(A)$.
\end{proof}
\begin{prop}
Let $\{\varphi_n:n\in\mathbb{N}\}$ be a collection of bifunctions such that $f:A\rightarrow\mathbb{R}$ is geodesic $\varphi_n$-convex functions for each $n$. If $\varphi_n\rightarrow\varphi$ as $n\rightarrow\infty$ then $f$ is also a geodesic $\varphi$-convex function.
\end{prop}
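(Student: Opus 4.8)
The plan is to obtain the conclusion by passing to the limit in the defining inequality for geodesic $\varphi_n$-convexity. First I would fix arbitrary points $x,y\in A$, a parameter $t\in[0,1]$, and a geodesic arc $\alpha_{xy}\colon[0,1]\to M$ connecting $x$ and $y$. By hypothesis $f$ is geodesic $\varphi_n$-convex for every $n\in\mathbb{N}$, so for this fixed data the inequality
$$f(\alpha_{xy}(t))\leq f(x)+t\,\varphi_n(f(y),f(x))$$
holds for all $n$.

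Next I would observe that the left-hand side $f(\alpha_{xy}(t))$ is a fixed real number, independent of $n$, while the only $n$-dependence on the right-hand side occurs through the scalar $\varphi_n(f(y),f(x))$. Reading the hypothesis $\varphi_n\to\varphi$ as (at least) pointwise convergence of bifunctions on $\mathbb{R}^2$, we obtain $\varphi_n(f(y),f(x))\to\varphi(f(y),f(x))$ as $n\to\infty$ for the single argument $(f(y),f(x))$. Since the term $t$ is a fixed nonnegative constant, letting $n\to\infty$ and using the fact that a non-strict inequality is preserved under passage to the limit yields
$$f(\alpha_{xy}(t))\leq f(x)+t\,\varphi(f(y),f(x)).$$
Because $x,y\in A$ and $t\in[0,1]$ were arbitrary, this is exactly the statement that $f$ is geodesic $\varphi$-convex, which finishes the argument.

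I do not expect a genuine obstacle here: the computation is immediate once the defining inequality is written down, and the entire content is the stability of weak inequalities under limits. The only point deserving care is the precise meaning attached to $\varphi_n\to\varphi$. The proof requires nothing more than pointwise convergence evaluated at the point $(f(y),f(x))$ for each fixed pair $x,y$, and this is implied by any of the standard notions of convergence of functions (pointwise, uniform, or uniform on compact sets); I would state explicitly that pointwise convergence suffices so that the hypothesis is used in its weakest natural form.
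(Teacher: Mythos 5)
Your argument is correct. The paper actually states this proposition without any proof, so there is nothing to compare against; your limit-passing argument --- fix $x,y,t$ and the geodesic, write the inequality for each $n$, and use that non-strict inequalities are preserved under pointwise convergence at the single point $(f(y),f(x))$ --- is clearly the intended one, and you rightly identify the only point needing care, namely that the unspecified hypothesis $\varphi_n\rightarrow\varphi$ need only mean pointwise convergence.
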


\begin{prop}
Let $\{\varphi_n:n\in\mathbb{N}\}$ be a collection of bifunctions such that $f:A\rightarrow\mathbb{R}$ is geodesic $\sum_{i=1}^{n}\varphi_i$-convex function for each $n$. If $\sum_{n\in\mathbb{N}}\varphi_n$ converges to $\varphi$ then $f$ is also a geodesic $\varphi$-convex function.
\end{prop}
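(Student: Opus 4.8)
The plan is to reduce this proposition to the one immediately preceding it. I would set $\psi_n := \sum_{i=1}^n \varphi_i$ to be the $n$-th partial sum of the series. The hypothesis that $f$ is geodesic $\sum_{i=1}^n\varphi_i$-convex for each $n$ says exactly that $f$ is geodesic $\psi_n$-convex for every $n$, and the statement that $\sum_{n\in\mathbb{N}}\varphi_n$ converges to $\varphi$ is precisely the statement that $\psi_n\to\varphi$ pointwise as $n\to\infty$. Hence the collection $\{\psi_n\}$ together with the function $f$ satisfies the hypotheses of the previous proposition, which then immediately gives that $f$ is geodesic $\varphi$-convex.

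For a self-contained argument I would instead run the underlying limiting computation directly. Fixing $x,y\in A$ and $t\in[0,1]$, geodesic $\psi_n$-convexity yields, for every $n$,
$$f(\alpha_{xy}(t))\leq f(x)+t\,\psi_n(f(y),f(x))=f(x)+t\sum_{i=1}^n\varphi_i(f(y),f(x)).$$
The left-hand side does not depend on $n$, while by the assumed convergence of the series the right-hand side tends to $f(x)+t\,\varphi(f(y),f(x))$ as $n\to\infty$. Passing to the limit in a weak inequality then gives $f(\alpha_{xy}(t))\leq f(x)+t\,\varphi(f(y),f(x))$, and since $x,y,t$ were arbitrary this is exactly the definition of geodesic $\varphi$-convexity.

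I do not expect any genuine obstacle here. The one point requiring a little care is the reading of the word ``converges'': it should be understood as convergence of the scalar series $\sum_i\varphi_i(f(y),f(x))$ for each fixed pair $x,y$, so that no uniformity in the arguments is needed. The only analytic fact invoked is the elementary one that $a\leq b_n$ for all $n$ together with $b_n\to b$ forces $a\leq b$, which is precisely what lets the inequality survive the passage to the limit.
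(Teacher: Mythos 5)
Your argument is correct. Note that the paper itself states this proposition (and the one preceding it) without any proof, so there is nothing to compare against; your direct limiting computation is surely the intended argument. Both of your routes work: the reduction to the preceding proposition via the partial sums $\psi_n=\sum_{i=1}^n\varphi_i$ is immediate, and the self-contained version correctly isolates the only analytic ingredient, namely that $a\leq b_n$ for all $n$ and $b_n\to b$ imply $a\leq b$, applied pointwise at $(f(y),f(x))$ for each fixed $x,y\in A$ and $t\in[0,1]$.
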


\begin{thm}
If $f:A\rightarrow\mathbb{R}$ is strictly geodesic $\varphi$-convex with $\varphi$ as antisymmetric function, then $df_x\dot{\alpha}_{xy}\neq df_y\dot{\alpha}_{xy}$ for any $x,y\in A$, $x\neq y$.
\end{thm}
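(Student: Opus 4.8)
The plan is to exploit the differential characterization of strict geodesic $\varphi$-convexity recorded in the Remark following Definition 2.8, namely that a differentiable $f$ is strictly geodesic $\varphi$-convex if and only if
$$df_x\dot{\alpha}_{xy} < \varphi(f(y),f(x))$$
for all $x,y\in A$ with $x\neq y$, together with the antisymmetry hypothesis $\varphi(u,v)=-\varphi(v,u)$. The idea is to read off two opposite strict inequalities for $df_x\dot{\alpha}_{xy}$ and $df_y\dot{\alpha}_{xy}$ by applying this characterization once to the ordered pair $(x,y)$ and once to the reversed pair $(y,x)$, and then to let antisymmetry supply the cancellation that pins the two quantities onto opposite sides of $\varphi(f(y),f(x))$.

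First I would fix $x,y\in A$ with $x\neq y$ and apply the characterization directly to obtain
$$df_x\dot{\alpha}_{xy} < \varphi(f(y),f(x)). \quad (\ast)$$
Next I would consider the reversed geodesic $\beta(t)=\alpha_{xy}(1-t)$, which is again a geodesic and joins $y=\beta(0)$ to $x=\beta(1)$; thus $\beta$ serves as the connecting geodesic $\alpha_{yx}$ for the pair $(y,x)$. Differentiating the reparametrization gives $\dot{\beta}(0)=-\dot{\alpha}_{xy}(1)$, so that, by linearity of the differential $df_y$ on $T_yM$,
$$df_y\dot{\beta} = df_y\bigl(-\dot{\alpha}_{xy}(1)\bigr) = -\,df_y\dot{\alpha}_{xy}.$$
Applying the strict characterization to $(y,x)$ along $\beta$ yields $df_y\dot{\beta} < \varphi(f(x),f(y))$, and invoking antisymmetry of $\varphi$ this rearranges to
$$df_y\dot{\alpha}_{xy} > -\varphi(f(x),f(y)) = \varphi(f(y),f(x)). \quad (\ast\ast)$$

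Finally I would combine $(\ast)$ and $(\ast\ast)$ into the chain
$$df_x\dot{\alpha}_{xy} < \varphi(f(y),f(x)) < df_y\dot{\alpha}_{xy},$$
which gives the strict separation $df_x\dot{\alpha}_{xy} < df_y\dot{\alpha}_{xy}$ and in particular $df_x\dot{\alpha}_{xy}\neq df_y\dot{\alpha}_{xy}$, as required. The main point to get right is the bookkeeping of tangent vectors: $df_x\dot{\alpha}_{xy}$ is evaluated on $\dot{\alpha}_{xy}(0)\in T_xM$ whereas $df_y\dot{\alpha}_{xy}$ is evaluated on $\dot{\alpha}_{xy}(1)\in T_yM$, and the sign flip produced by reversing the geodesic is exactly what converts the strict inequality for the pair $(y,x)$ into the reversed strict inequality for $df_y\dot{\alpha}_{xy}$. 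I expect no serious obstacle beyond this sign and parametrization check, since antisymmetry furnishes the decisive cancellation $\varphi(f(x),f(y))+\varphi(f(y),f(x))=0$; the only care needed is to record that reversing $\alpha_{xy}$ is a legitimate choice of geodesic arc $\alpha_{yx}$ so that the strict characterization applies verbatim.
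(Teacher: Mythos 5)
Your proposal is correct and follows essentially the same route as the paper: both arguments reverse the geodesic to get $df_y\dot{\alpha}_{yx}=-df_y\dot{\alpha}_{xy}$, apply the strict differential inequality to the pairs $(x,y)$ and $(y,x)$, and let antisymmetry of $\varphi$ flip the second inequality. The only cosmetic difference is that you derive the strict chain $df_x\dot{\alpha}_{xy}<\varphi(f(y),f(x))<df_y\dot{\alpha}_{xy}$ directly, whereas the paper phrases the same computation as a proof by contradiction.
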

\begin{proof}
Let $\alpha_{xy}:[0,1]\rightarrow M$ be a geodesic with starting point $x$ and ending point $y$. \\
Now define $\alpha_{yx}(t)=\alpha_{xy}(1-t),\ t\in [0,1]$. Then $\alpha_{yx}$ is a geodesic with starting point $y$ and ending point $x$ and $df_y\dot{\alpha}_{yx}=-df_y\dot{\alpha}_{xy}$. \\
On contrary, suppose that $df_x\dot{\alpha}_{xy}= df_y\dot{\alpha}_{xy}$. Now since $f$ is geodesic $\varphi$-convex,  we get
\begin{equation*}
df_x\dot{\alpha}_{xy}<\varphi(f(y),f(x))
\end{equation*}
 and 
\begin{equation*}
df_y\dot{\alpha}_{yx}<\varphi(f(x),f(y)).
\end{equation*}
Since $ df_y\dot{\alpha}_{yx}=-df_y\dot{\alpha}_{xy}$, we get
\begin{eqnarray*}
-df_y\dot{\alpha}_{xy}&<&\varphi(f(x),f(y)). \\
\end{eqnarray*}
Using antisymmetry property of $\varphi$, we obtain
\begin{eqnarray*}
df_y\dot{\alpha}_{xy}&>&\varphi(f(y),f(x)).\\
\end{eqnarray*}
Since $df_x\dot{\alpha}_{xy}= df_y\dot{\alpha}_{xy}$, we get
\begin{eqnarray*}
df_x\dot{\alpha}_{xy}&>&\varphi(f(y),f(x)).\\
\end{eqnarray*}
Hence $\varphi(f(y),f(x))<\varphi(f(y),f(x))$, which is a contradiction.  Hence $df_x\dot{\alpha}_{xy}\neq df_y\dot{\alpha}_{xy}$.
\end{proof}

\begin{thm}
Let $S \subseteq M$ be a geodesic invex set with respect to $\eta$. Suppose that  $f:S\rightarrow \mathbb R$ is geodesic $\varphi$-convex and $g:I\rightarrow \mathbb R$ is a non-decreasing $G$-preinvex function with respect to $\phi$ and $\psi$ such that range $(f) \subset I$. Then $gof:I\rightarrow \mathbb R$ is a geodesic $\psi$-convex function. 
\end{thm}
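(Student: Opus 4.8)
The plan is to mimic the composition argument of Theorem 4.2, with geodesic convexity of $f$ replaced by geodesic $\varphi$-convexity and $\varphi$-convexity of $g$ replaced by $G$-preinvexity. Fix $x,y\in S$ and let $\alpha_{xy}:[0,1]\to M$ be the geodesic joining them inside $S$, which is available since $S$ is geodesic invex with respect to $\eta$. The target is the defining inequality of geodesic $\psi$-convexity of $g\circ f$, namely
$$g\big(f(\alpha_{xy}(t))\big)\le g(f(x))+t\,\psi\big(g(f(y)),g(f(x))\big),\qquad t\in[0,1].$$

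First I would invoke the geodesic $\varphi$-convexity of $f$ to obtain $f(\alpha_{xy}(t))\le f(x)+t\,\varphi(f(y),f(x))$. Because $g$ is non-decreasing, applying it to both sides preserves the inequality and gives $g(f(\alpha_{xy}(t)))\le g\big(f(x)+t\,\varphi(f(y),f(x))\big)$. The second step is to read the argument on the right-hand side as a point of the form $v+t\,\phi(u,v)$ occurring in the $G$-preinvexity of $g$: taking $v=f(x)$ and $u=f(y)$, both of which lie in $\operatorname{range}(f)\subseteq I$, the $G$-preinvexity of $g$ with respect to $\phi$ and $\psi$ yields $g\big(f(x)+t\,\phi(f(y),f(x))\big)\le g(f(x))+t\,\psi\big(g(f(y)),g(f(x))\big)$. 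Concatenating the two estimates produces precisely the required inequality.

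The step bearing all the weight, and the one I expect to be the main obstacle, is the bifunction matching in the middle: the quantity generated by $f$ is $f(x)+t\,\varphi(f(y),f(x))$, whereas the $G$-preinvexity of $g$ only controls $g$ at points of the shape $f(x)+t\,\phi(f(y),f(x))$. For the chain to close one needs $\phi(f(y),f(x))=\varphi(f(y),f(x))$ for all $x,y\in S$, that is, the invexity map $\phi$ of $g$ must agree with $\varphi$ on $\operatorname{range}(f)\times\operatorname{range}(f)$; I would either impose this compatibility explicitly or regard it as implicit in the hypothesis. The same identification simultaneously settles the only well-definedness concern, namely that $g$ is evaluated at $f(x)+t\,\varphi(f(y),f(x))$: since $I$ is invex with respect to $\phi$ and $f(x),f(y)\in I$, the point $f(x)+t\,\phi(f(y),f(x))$ indeed belongs to $I$. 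Once this compatibility is granted, the remaining manipulations are entirely formal and follow the pattern of Theorem 4.2.
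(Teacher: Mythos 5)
Your proof follows essentially the same route as the paper's: apply geodesic $\varphi$-convexity of $f$, then monotonicity of $g$, then $G$-preinvexity of $g$ to obtain the $\psi$-bound. The compatibility issue you flag --- that $G$-preinvexity of $g$ only controls points of the form $v+t\,\phi(u,v)$ while $f$ produces $v+t\,\varphi(u,v)$, so one needs $\phi=\varphi$ on $\mathrm{range}(f)\times\mathrm{range}(f)$ --- is genuine, and the paper's own proof silently elides it, so your version is the more careful rendering of the same argument.
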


\begin{proof}
Let $x,y \in S$. Since $f$ is a geodesic $\varphi$-convex function, we have
$$f(\alpha_{xy}(t))\leq f(y)+t \varphi (f(x), f(y)).$$
Since $g$ is non-decreasing G-preinvex function, we have
\begin{eqnarray*}
g(f(\alpha_{xy}(t))) &\leq&  g(f(y)+t \varphi (f(x), f(y)))\\
&\leq& g(f(y))+t \psi (g(f(x)), g(f(y))).
\end{eqnarray*}
\end{proof}

\section{\textbf{$\varphi$-Epigraphs}}
In this section we have introduced the notion of $\varphi$-epigraphs on complete Riemannian manifolds and obtained a characterization of geodesic $\varphi$-convex function in terms of their $\varphi$-epigraphs. 
\begin{defn}
A set $B\subseteq M\times\mathbb{R}$ is said to be geodesic $\varphi$-convex if and only if for any two points $(x,\alpha),(y,\beta)\in B$ imply
$$(\alpha_{xy}(t),\alpha+t\varphi(\beta,\alpha))\in B, \quad 0\leq t\leq 1.$$
\end{defn}
If $A$ is a totally geodesic convex subset  of $M$ and $K$ is an invex subset of $\mathbb{R}$ with respect to $\varphi$,  then $A\times K$ is geodesic $\varphi$-convex.\\
\indent Now the $\varphi$-epigraph of $f$ is defined by
$$E_\varphi(f)=\{(x,\alpha)\in M\times\mathbb{R}:f(x)\leq \alpha\}.$$
\begin{defn}\cite{LIN04}
A function $\varphi:\mathbb{R}\times\mathbb{R}\rightarrow\mathbb{R}$ is \textit{non-decreasing} if $x_1\leq x_2$ and $y_1\leq y_2$ implies $\varphi(x_1,y_1)\leq \varphi(x_2,y_2)$ for $x_1,x_2,y_1,y_2\in\mathbb{R}$.
\end{defn}

\begin{thm}\label{th4}
Let $A$ be a totally geodesic convex subset of $M$ and $\varphi$ be non-decreasing. Then $f:A\rightarrow\mathbb{R}$ is geodesic $\varphi$-convex if and only if $E_\varphi(f)$ is a geodesic $\varphi$-convex set.
\end{thm}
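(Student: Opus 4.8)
The plan is to prove the two implications separately, the key observation being that the defining inequality for a geodesic $\varphi$-convex \emph{set}, when specialized to the two graph points $(x,f(x))$ and $(y,f(y))$, coincides exactly with the defining inequality for a geodesic $\varphi$-convex \emph{function}. So one direction is essentially a restriction argument and the other is a monotonicity argument.

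For the forward implication, I would assume $f$ is geodesic $\varphi$-convex and take arbitrary points $(x,\alpha),(y,\beta)\in E_\varphi(f)$, so that $f(x)\le\alpha$ and $f(y)\le\beta$. The goal is to show $(\alpha_{xy}(t),\alpha+t\varphi(\beta,\alpha))\in E_\varphi(f)$ for $t\in[0,1]$, which by the definition of the $\varphi$-epigraph means $f(\alpha_{xy}(t))\le\alpha+t\varphi(\beta,\alpha)$. Starting from geodesic $\varphi$-convexity, I have $f(\alpha_{xy}(t))\le f(x)+t\varphi(f(y),f(x))$, and then I bound each term on the right: $f(x)\le\alpha$ holds directly, while $\varphi(f(y),f(x))\le\varphi(\beta,\alpha)$ follows from the non-decreasing hypothesis on $\varphi$ applied to $f(y)\le\beta$ and $f(x)\le\alpha$. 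Multiplying this last inequality by $t\ge 0$ and adding yields the required bound, so $(\alpha_{xy}(t),\alpha+t\varphi(\beta,\alpha))\in E_\varphi(f)$.

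For the reverse implication, I would assume $E_\varphi(f)$ is a geodesic $\varphi$-convex set and take any $x,y\in A$. The points $(x,f(x))$ and $(y,f(y))$ lie in $E_\varphi(f)$ trivially, since $f(x)\le f(x)$ and $f(y)\le f(y)$. Applying the set-convexity condition to these two points gives $(\alpha_{xy}(t),\,f(x)+t\varphi(f(y),f(x)))\in E_\varphi(f)$, which by the definition of the $\varphi$-epigraph says exactly $f(\alpha_{xy}(t))\le f(x)+t\varphi(f(y),f(x))$ for all $t\in[0,1]$. This is precisely the geodesic $\varphi$-convexity of $f$, so nothing further is needed here.

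The one genuinely delicate point, and the main obstacle, sits in the forward direction: the non-decreasing property of $\varphi$ is exactly the ingredient that lets one pass from the graph-level inequality to the full epigraph-level inequality, since without monotonicity there is no control of $\varphi(f(y),f(x))$ in terms of $\varphi(\beta,\alpha)$. I would also flag that the nonnegativity of the scalar $t$ is what preserves the direction of the inequality $\varphi(f(y),f(x))\le\varphi(\beta,\alpha)$ after multiplication. The reverse direction, by contrast, uses neither the monotonicity of $\varphi$ nor any extra structure, because it only tests the set condition at graph points where the epigraph inequality is saturated.
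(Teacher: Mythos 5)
Your proposal is correct and follows essentially the same route as the paper's own proof: the forward direction combines $f(x)\le\alpha$, $f(y)\le\beta$ with the non-decreasing hypothesis on $\varphi$ to upgrade the graph-level inequality to the epigraph-level one, and the reverse direction tests the set condition at the points $(x,f(x))$ and $(y,f(y))$. Your explicit remarks on where monotonicity and the sign of $t$ enter are slightly more detailed than the paper's, but the argument is the same.
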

\begin{proof}
Suppose that $f:A\rightarrow\mathbb{R}$ is a geodesic $\varphi$-convex. Let $(x,\alpha),(y,\beta)\in E_\varphi(f)$. Then $f(x)\leq \alpha$ and $f(y)\leq \beta$. Since $f$ is geodesic $\varphi$-convex, hence 
$$f(\alpha_{xy}(t))\leq f(x)+t\varphi(f(y),f(x)),$$
for any geodesic $\alpha_{xy}:[0,1]\rightarrow M$ connecting $x$ and $y$.
Since $\varphi$ is non-decreasing, so 
$$f(\alpha_{xy}(t))\leq \alpha+t\varphi(\beta,\alpha),\ \forall t\in [0,1].$$
Hence $$(\alpha_{xy}(t),\alpha+t\varphi(\beta,\alpha))\in E_\varphi(f)\ \forall t\in[0,1].$$ That is, $E_\varphi(f)$ is a geodesic $\varphi$-convex set.\\
\indent Conversely, assume that $E_\varphi(f)$ is a geodesic $\varphi$-convex set.\\
Let $x,y\in A$. Then $(x,f(x)),(y,f(y))\in E_\varphi(f)$. Hence for any $t\in[0,1],$
$$(\alpha_{xy}(t),f(x)+t\varphi(f(y),f(x)))\in E_\varphi(f),$$
which implies that $$f(\alpha_{xy}(t))\leq f(x)+t\varphi(f(y),f(x)),$$
for any $x,y\in A$ and for any geodesic $\alpha_{xy}:[0,1]\rightarrow M$. So, $f$ is a geodesic $\varphi$-convex function.
\end{proof}
\begin{thm}\label{th5}
Let $A_i,\ i\in \Lambda$, be a family of geodesic $\varphi$-convex sets. Then their intersection $A=\bigcap_{i\in\Lambda}A_i$ is also a geodesic $\varphi$-convex set.
\end{thm}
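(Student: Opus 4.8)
The plan is to exploit the elementary fact that membership in an intersection is equivalent to simultaneous membership in every member of the family, together with the observation that the geodesic arc $\alpha_{xy}$ is an object of the ambient manifold $M$ and hence is common to all of the sets $A_i$. No special properties of $\varphi$ beyond those already built into the definition of a geodesic $\varphi$-convex set will be needed.

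First I would fix an arbitrary pair of points $(x,\alpha),(y,\beta)\in A=\bigcap_{i\in\Lambda}A_i$. By the definition of intersection this means $(x,\alpha),(y,\beta)\in A_i$ for every index $i\in\Lambda$. Next, for each fixed $i$, I would invoke the hypothesis that $A_i$ is geodesic $\varphi$-convex: since both points lie in $A_i$, Definition 5.1 yields
$$(\alpha_{xy}(t),\alpha+t\varphi(\beta,\alpha))\in A_i,\qquad \forall\, t\in[0,1].$$
The crucial point here is that the geodesic $\alpha_{xy}:[0,1]\to M$ is determined by the endpoints $x,y\in M$ alone and not by the particular set $A_i$, so one and the same curve $\alpha_{xy}$ serves every index $i$ simultaneously.

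Finally, because the displayed membership holds for every $i\in\Lambda$, the point $(\alpha_{xy}(t),\alpha+t\varphi(\beta,\alpha))$ belongs to $\bigcap_{i\in\Lambda}A_i=A$ for all $t\in[0,1]$, which is precisely the defining condition for $A$ to be a geodesic $\varphi$-convex set. (If the intersection is empty the condition is vacuously satisfied.) I do not anticipate any genuine obstacle: the entire argument collapses to a one-line intersection argument. The only subtlety worth flagging is that geodesic $\varphi$-convexity is phrased relative to a geodesic of $M$, and one must note that the same $\alpha_{xy}$ may legitimately be used across the whole family; once this is recorded, the proof is immediate and holds for an arbitrary (possibly infinite) index set $\Lambda$.
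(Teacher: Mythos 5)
Your proof is correct and follows exactly the same route as the paper's: fix $(x,\alpha),(y,\beta)$ in the intersection, apply the definition of geodesic $\varphi$-convexity in each $A_i$, and conclude membership in $\bigcap_{i\in\Lambda}A_i$. Your added remarks (that the same geodesic $\alpha_{xy}$ serves all indices, and that the empty case is vacuous) are sensible clarifications but do not change the argument.
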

\begin{proof}
Let $(x,\alpha),(y,\beta)\in A$. Then for each $i\in \Lambda,\ (x,\alpha),(y,\beta)\in A_i$. Since each $A_i$ is geodesic $\varphi$-convex for each $i\in \Lambda$, hence
$$(\alpha_{xy}(t),\alpha+t\varphi(\beta,\alpha))\in A_i,\quad \forall t\in [0,1].$$
This implies
$$(\alpha_{xy}(t),\alpha+t\varphi(\beta,\alpha))\in \bigcap_{i\in\Lambda}A_i,\quad \forall t\in [0,1].$$
Hence $A$ is a geodesic $\varphi$-convex set.
\end{proof}
As a direct consequence of Theorem \ref{th4} and Theorem \ref{th5},  we get the following corollary.
\begin{cor}
Let $\{f_i\}_{i\in\Lambda}$ be a family of geodesic $\varphi$-convex functions defined on a totally convex set $A\subseteq M$ which are bounded above and $\varphi$ be non-decreasing. If the $\varphi$-epigraphs $E_\varphi(f_i)$ are geodesic $\varphi$-convex sets, then $f=\sup_{i\in\Lambda}f_i$ is also a geodesic $\varphi$-convex function on $A$.
\end{cor}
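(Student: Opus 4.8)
The plan is to deduce the result by combining Theorem~\ref{th4} and Theorem~\ref{th5} through a single set-theoretic identity: the $\varphi$-epigraph of a supremum equals the intersection of the individual $\varphi$-epigraphs. Since the family $\{f_i\}_{i\in\Lambda}$ is bounded above, the function $f=\sup_{i\in\Lambda}f_i$ takes finite real values at every point of $A$, so $E_\varphi(f)=\{(x,\alpha)\in M\times\mathbb{R}:f(x)\leq\alpha\}$ is a well-defined subset of $M\times\mathbb{R}$.

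First I would establish the identity $E_\varphi(f)=\bigcap_{i\in\Lambda}E_\varphi(f_i)$. This follows by unwinding the definitions: a pair $(x,\alpha)$ lies in $\bigcap_{i\in\Lambda}E_\varphi(f_i)$ precisely when $f_i(x)\leq\alpha$ for every $i\in\Lambda$, which by the definition of supremum is equivalent to $\sup_{i\in\Lambda}f_i(x)\leq\alpha$, i.e.\ to $f(x)\leq\alpha$, i.e.\ to $(x,\alpha)\in E_\varphi(f)$. The boundedness hypothesis is exactly what is needed here to guarantee that the supremum is finite, so that this equality is between genuine $\varphi$-epigraphs rather than a degenerate object.

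Next I would invoke the hypothesis that each $E_\varphi(f_i)$ is a geodesic $\varphi$-convex set (which, since each $f_i$ is geodesic $\varphi$-convex and $\varphi$ is non-decreasing, is in any case supplied by the forward direction of Theorem~\ref{th4}). Applying Theorem~\ref{th5} to the family $\{E_\varphi(f_i)\}_{i\in\Lambda}$ shows that the intersection $\bigcap_{i\in\Lambda}E_\varphi(f_i)$ is a geodesic $\varphi$-convex set; by the identity of the previous paragraph this is precisely $E_\varphi(f)$. Finally, since $A$ is totally geodesic convex and $\varphi$ is non-decreasing, the converse direction of Theorem~\ref{th4} applies and yields that $f$ is geodesic $\varphi$-convex on $A$, as desired.

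The argument is essentially a reduction, so no single step is a serious obstacle; the one point requiring care is the epigraph identity, which is the conceptual heart of the proof and must be verified in the correct direction so that the converse (rather than the forward) implication of Theorem~\ref{th4} is the one invoked at the end. I would also confirm explicitly that the hypotheses of Theorem~\ref{th4}, namely total geodesic convexity of $A$ and monotonicity of $\varphi$, are available, as both are assumed in the statement of the corollary.
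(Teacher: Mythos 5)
Your proof is correct and follows exactly the route the paper intends: the paper states the corollary as a direct consequence of Theorem~\ref{th4} and Theorem~\ref{th5} without writing out the details, and your argument---the identity $E_\varphi(\sup_i f_i)=\bigcap_{i}E_\varphi(f_i)$, geodesic $\varphi$-convexity of the intersection via Theorem~\ref{th5}, and then the converse direction of Theorem~\ref{th4}---is precisely the intended chain of reasoning. Your explicit verification of the epigraph identity and of the hypotheses of Theorem~\ref{th4} fills in what the paper leaves implicit.
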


\end{document}